\newtheorem{theorem}{Theorem}
\newtheorem{lemma}[theorem]{Lemma}
\newtheorem{proposition}[theorem]{Proposition}
\newtheorem{corollary}[theorem]{Corollary}
\title{Group topologies on integers and S-unit equations}
\author{S.V.~Skresanov \thanks{The work was supported by the program of fundamental scientific researches
of the SB RAS No. I.1.1., Project No. 0314-2019-0001.}}
\date{}
\begin{document}
\maketitle

\begin{abstract}
	A sequence of integers \( \{ s_n \}_{n \in \mathbb{N}} \) is called a
	T-sequence if there exists a Hausdorff group topology on \( \mathbb{Z} \) such that
	\( \{ s_n \}_{n \in \mathbb{N}} \) converges to zero. For every finite set of primes \( S \)
	we build a Hausdorff group topology on \( \mathbb{Z} \)	such that every growing sequence of \( S \)-integers
	converges to zero. As a corollary, we solve in the affirmative an open problem
	by I.V.~Protasov and E.G.~Zelenuk asking if \( \{ 2^n + 3^n \}_{n \in \mathbb{N}} \) is a T-sequence.
	Our results rely on a nontrivial number-theoretic fact about \( S \)-unit equations.
	\medskip

	\noindent
	\textbf{Keywords:} topological group, T-sequence, S-unit, Diophantine equation.
\end{abstract}

\section{Introduction}

In \cite{prot} I.V.~Protasov and E.G.~Zelenuk introduced the concept of a \textit{T-sequence}:
a sequence of integers \( \{ s_n \}_{n \in \mathbb{N}} \) is called a T-sequence
if there exists a Hausdorff group topology on \( \mathbb{Z} \) such that \( \{ s_n \}_{n \in \mathbb{N}} \)
converges to \( 0 \).
This new tool proved to be useful in constructing topological groups
with required properties and also showed some profound connections to number theory.
For instance, if \( \lim_{n \to \infty} \frac{s_{n+1}}{s_n} = \infty \) or
\( \lim_{n \to \infty} \frac{s_{n+1}}{s_n} = \alpha \), where \( \alpha \) is transcendental,
then \( \{ s_n \}_{n \in \mathbb{N}} \) is a T-sequence (see \cite[Theorems~10 and~11]{prot}).
Certain recurrences define T-sequences, for example, Fibonacci sequence is a T-sequence (see \cite[Theorem~14 and its corollary]{prot}).
Also, every T-sequence has natural density \( 0 \) \cite[Lemma~6]{prot}.

In this short note we address a question posed by I.V.~Protasov and E.G.~Zelenuk in \cite[Question~1]{openpr}
(cf. \cite[Question~15.79]{kourovka}): 
\medskip

\noindent
\textit{Is \( \{ 2^n + 3^n \}_{n \in \mathbb{N}} \) a T-sequence?}
\medskip

This sequence fails the sufficient conditions that we have mentioned before and has natural density \( 0 \).
Notice also that \( \{ 2^n \}_{n \in \mathbb{N}} \) and \( \{ 3^n \}_{n \in \mathbb{N}} \) are T-sequences,
since they converge to \( 0 \) in a \( 2 \)-adic and \( 3 \)-adic topologies respectively, but clearly no \( p \)-adic
topology will work for \( \{ 2^n + 3^n \}_{n \in \mathbb{N}} \).

Despite those difficulties, we answer that question in the affirmative. In fact, we are able to prove that a
class of T-sequences is sufficiently wide. For instance, a sum of several growing geometric progressions
will always constitute a T-sequence (see corollary below), so in particular, \( \{ 2^n + 3^n \}_{n \in \mathbb{N}} \)
is a T-sequence.

We introduce several definitions. Let \( S \) be a finite set of primes.
An integer is called an \textit{\( S \)-integer} if it is divisible only by primes from \( S \).
For two topologies \( \tau \) and \( \sigma \) we say that \( \sigma \) is \textit{stronger} than \( \tau \)
if \( \tau \subseteq \sigma \). If \( \{ s_n \}_{n \in \mathbb{N}} \) is a T-sequence, then there exists the
strongest Hausdorff group topology on \( \mathbb{Z} \) where \( \{ s_n \}_{n \in \mathbb{N}} \) converges
to zero. It is unique (see~\cite{prot}) so let \( \mathbb{Z}\{s_n\} \) denote the group \( \mathbb{Z} \)
endowed with that topology.

\begin{theorem}\label{mainth}
	Let \( S \) be a nonempty finite set of primes. There exists a T-sequence of
	\( S \)-integers \( \{ s_n \}_{n \in \mathbb{N}} \)
	such that for every sequence of \( S \)-integers \( \{ d_n \}_{n \in \mathbb{N}} \)
	with \( \lim_{n \to \infty} |d_n| = \infty \),
	\( \{ d_n \}_{n \in \mathbb{N}} \) converges to \( 0 \) in \( \mathbb{Z}\{ s_n \} \).
\end{theorem}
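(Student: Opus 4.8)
The plan is to construct the T-sequence $\{s_n\}$ as an enumeration of *all* $S$-integers, so that every growing sequence of $S$-integers is automatically a subsequence-with-repetitions of $\{s_n\}$ in a suitable sense, and then the problem reduces to showing that this enumeration is itself a T-sequence. Concretely, list the positive $S$-integers (and their negatives) in a fixed order, say by absolute value; the key point is that if $\tau$ is any Hausdorff group topology on $\mathbb Z$ in which $\{s_n\}\to 0$, then in $\tau$ every enumeration of $S$-integers tending to infinity in absolute value converges to $0$, because any such sequence $\{d_n\}$ eventually avoids any fixed finite set, hence its tail lies in $\{s_n : n \ge N\}$, and in a group topology convergence to $0$ of $\{s_n\}$ forces every tail of an enumeration of the same set to converge to $0$. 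So the entire content is: \emph{the set of all $S$-integers, suitably enumerated, is a T-sequence}. Once that is established, taking $\mathbb Z\{s_n\}$ to be the strongest such topology gives exactly the conclusion.

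To show that a sequence running through all $S$-integers is a T-sequence, I would use the standard criterion from \cite{prot}: a sequence $\{s_n\}$ is a T-sequence iff one can build a descending chain of subsets $\mathbb Z = A_0 \supseteq A_1 \supseteq \cdots$ with $0 \in A_k$, each $A_k$ symmetric, $A_{k+1} + A_{k+1} \subseteq A_k$ (so the $A_k$ form a neighborhood base at $0$ for a group topology), $\bigcap_k A_k = \{0\}$ (Hausdorff), and the tail condition that for every $k$ there is $N_k$ with $s_n \in A_k$ for all $n \ge N_k$. The sets $A_k$ should be defined arithmetically to absorb $S$-integers while excluding enough points to separate $0$. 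A natural candidate: fix a rapidly growing sequence of auxiliary primes $q_k \notin S$ (or prime powers), and let $A_k$ consist of $0$ together with integers $m$ that can be written as a bounded-length sum $\pm a_1 \pm \cdots \pm a_r$ of $S$-integers each divisible by a large power of $q_k$ — the divisibility forcing $m \equiv 0$ modulo something large, which gives Hausdorffness, while the flexibility of sums gives $A_{k+1}+A_{k+1}\subseteq A_k$.

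The genuine obstacle — and where the advertised S-unit input enters — is verifying $\bigcap_k A_k = \{0\}$, i.e. that a fixed nonzero integer $m$ does *not* admit, for every $k$, a bounded-length representation as a signed sum of $S$-integers divisible by arbitrarily large powers of the chosen primes. If $m = \pm a_1 \pm \cdots \pm a_r$ with all $a_i$ being $S$-integers and all highly divisible by a prime $q$, then dividing through by the minimal power of $q$ gives a relation $\pm a_1' \pm \cdots \pm a_r' = m/q^t$ among $S$-units (after also dividing out the $S$-part), and the finiteness theorem for S-unit equations bounds the number of solutions of $x_1 + \cdots + x_r = $ const with the $x_i$ $S$-units and no vanishing subsum; one must argue that such solutions cannot persist as $q$-divisibility tends to infinity unless $m = 0$. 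This is the step that cannot be done by elementary manipulation: controlling unbounded-length sums requires splitting off vanishing subsums and inducting on length, reducing to the nondegenerate S-unit equation whose solution set is finite, hence eventually incompatible with large $q$-adic valuation.

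Finally I would assemble the pieces: the $A_k$ define a Hausdorff group topology $\tau$; the enumeration $\{s_n\}$ of $S$-integers converges to $0$ in $\tau$ by the tail condition, so $\{s_n\}$ is a T-sequence and $\mathbb Z\{s_n\}$ is defined; and for an arbitrary sequence of $S$-integers $\{d_n\}$ with $|d_n|\to\infty$, every $A_k$ contains all but finitely many $S$-integers (being closed under the relevant operations and containing each $S$-integer once its $q_k$-divisibility threshold is cleared — which I should double-check is built into the construction, adjusting $A_k$ so that it simply contains *every* $S$-integer of absolute value $\ge$ some bound $B_k$), hence $d_n \in A_k$ for $n$ large, giving $d_n \to 0$ in $\mathbb Z\{s_n\}$ since that topology is at least as strong only in the reverse direction — more precisely, $\{d_n\}\to 0$ already in $\tau$, and any Hausdorff group topology in which $\{s_n\}\to 0$ refines nothing here; what we actually need is that $\{d_n\}\to 0$ in the *strongest* such topology, which holds because $\{d_n\}\to 0$ in $\tau$ and $\mathbb Z\{s_n\}$ is stronger than $\tau$, so convergence is preserved. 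That completes the proof.
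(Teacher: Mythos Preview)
Your outer architecture matches the paper exactly: enumerate all nonzero $S$-integers by increasing absolute value, reduce the theorem to showing that this enumeration is a T-sequence, and then observe that any $\{d_n\}$ with $|d_n|\to\infty$ has its tail contained in a tail of $\{s_n\}$, so it converges to $0$ in any topology where $\{s_n\}$ does (in particular in $\mathbb{Z}\{s_n\}$ itself). Your first paragraph gets this right; your last paragraph, however, says convergence in $\tau$ passes to the stronger topology $\mathbb{Z}\{s_n\}$, which is the wrong direction. Fortunately the correct argument (apply the first-paragraph reasoning directly with $\tau=\mathbb{Z}\{s_n\}$) is already present, so this is a slip rather than a structural gap.

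The substantive gap is in the middle step, where the paper and you diverge. The paper does \emph{not} construct a neighborhood base by hand. It invokes the Protasov--Zelenuk criterion (Lemma~\ref{newcrit}): $\{s_n\}$ is a T-sequence iff for every choice of coefficients $c_1,\dots,c_k$ and every nonzero $M$ there is a cutoff $m$ beyond which $c_1 s_{m_1}+\cdots+c_k s_{m_k}=M$ has no solution with $m<m_1<\cdots<m_k$. This turns the problem into a pure statement about $S$-unit equations, and the paper disposes of it via a short lemma (Lemma~\ref{inhom}): from the finiteness of \emph{nondegenerate} solutions one deduces that for any solution in $S$-units at least one coordinate lies in a fixed finite set $V_i$; since $|s_n|\to\infty$, eventually all $s_{m_i}$ escape the $V_i$, contradiction. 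No explicit topology is ever built.

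Your proposed direct construction has a concrete flaw. You take auxiliary primes $q_k\notin S$ and define $A_k$ via sums of $S$-integers ``each divisible by a large power of $q_k$''. But a nonzero $S$-integer is by definition divisible only by primes in $S$, so no nonzero $S$-integer is divisible by $q_k$ at all; your $A_k$ collapse to $\{0\}$ and the $s_n$ never enter them. What you likely intended is the standard Protasov--Zelenuk base, where $A_k$ consists of bounded-length signed sums of \emph{tail} elements $s_n$ with $n\ge N_k$ (a size condition, not a divisibility condition). If you redo it that way you are essentially re-deriving Lemma~\ref{newcrit}, and the verification that $\bigcap A_k=\{0\}$ becomes exactly the $S$-unit argument the paper isolates in Lemma~\ref{inhom}. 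So the route through the criterion is both shorter and correct; your divisibility-by-$q_k$ idea should be dropped.
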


We mention that the topology on \( \mathbb{Z}\{ s_n \} \) has several interesting properties,
for example, it is complete and not metrizable (see \cite[Theorem~8 and Corollary to Theorem~6]{prot}).

Our theorem deals with sequences where each term has prescribed prime factors
(they must lie in a given finite set). Observe that it is impossible to generalize this result to sequences
where each term has a uniformly bounded number of distinct prime factors.
Indeed, the sequence of primes \( \{ p_n \}_{n \in \mathbb{N}} \) is not a T-sequence
as there are infinitely many solutions to the equation \( {p_{n+1} - p_n = M} \), where \( M \leq 600 \)
(see \cite{gaps} and Lemma~\ref{newcrit} below).

The positive answer to the above question by Protasov and Zelenuk
is provided by the following application of Theorem~\ref{mainth}.
\begin{corollary}\label{geom}
	Let \( d_i, a_i \), \( i = 1, \dots, k \), be arbitrary integers. If
	\( a_i \neq \pm 1 \) for all \( i = 1, \dots, k \), then 
	\( \{ d_1a_1^n + \dots + d_ka_k^n \}_{n \in \mathbb{N}} \) is a T-sequence.
\end{corollary}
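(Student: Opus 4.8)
The plan is to obtain Corollary~\ref{geom} as a soft consequence of Theorem~\ref{mainth}. First I would reduce to the case \( |a_i| \ge 2 \) for all \( i \). A term with \( a_i = 0 \) satisfies \( d_i a_i^n = 0 \) for all \( n \ge 1 \) and so may be discarded, since whether a sequence is a T-sequence depends only on its behaviour for large \( n \); a term with \( a_i = \pm 1 \) is excluded by hypothesis. If no term survives, the sequence is eventually \( 0 \), hence converges to \( 0 \) in the discrete topology and is a T-sequence. So from now on assume \( k \ge 1 \) and \( |a_i| \ge 2 \) for every \( i \).

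Next, let \( S \) be the set of all primes dividing the product \( a_1 \cdots a_k \); since each \( |a_i| \ge 2 \), this is a nonempty finite set of primes, so Theorem~\ref{mainth} applies and furnishes a T-sequence of \( S \)-integers \( \{ s_n \}_{n \in \mathbb{N}} \) such that every sequence of \( S \)-integers whose absolute value tends to infinity converges to \( 0 \) in \( \mathbb{Z}\{ s_n \} \). I will show that \( \{ d_1 a_1^n + \dots + d_k a_k^n \}_{n \in \mathbb{N}} \) converges to \( 0 \) in this very topology; as \( \mathbb{Z}\{ s_n \} \) carries a Hausdorff group topology, that is precisely what being a T-sequence demands.

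For each \( i \) the sequence \( \{ a_i^n \}_{n \in \mathbb{N}} \) consists of \( S \)-integers (every prime divisor of \( a_i \) lies in \( S \)) and \( |a_i^n| = |a_i|^n \to \infty \) because \( |a_i| \ge 2 \); hence \( a_i^n \to 0 \) in \( \mathbb{Z}\{ s_n \} \) by Theorem~\ref{mainth}. In any topological group, multiplication by the fixed integer \( d_i \) — being an iterated sum, or its negative, or the constant \( 0 \) — is continuous, so \( d_i a_i^n \to 0 \); and a finite sum of sequences tending to \( 0 \) tends to \( 0 \) by continuity of addition. Therefore \( d_1 a_1^n + \dots + d_k a_k^n \to 0 \) in \( \mathbb{Z}\{ s_n \} \), as required. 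Taking \( k = 2 \), \( (a_1, a_2) = (2,3) \) and \( d_1 = d_2 = 1 \) gives the affirmative answer to the question of Protasov and Zelenuk.

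The entire difficulty lies inside Theorem~\ref{mainth} (the \( S \)-unit input); the deduction above is routine. The one conceptual point worth stressing is that I do \emph{not} attempt to control the sum \( d_1 a_1^n + \dots + d_k a_k^n \) directly — it need not even be an \( S \)-integer, e.g.\ \( 2^1 + 3^1 = 5 \) — but instead force each geometric term \( a_i^n \) separately to \( 0 \), which is possible exactly because Theorem~\ref{mainth} makes \emph{all} growing sequences of \( S \)-integers null, not merely one prescribed sequence. Finally, the hypothesis \( a_i \neq \pm 1 \) cannot be removed: a sequence assuming a fixed nonzero value infinitely often cannot converge to \( 0 \) in a Hausdorff group topology on \( \mathbb{Z} \).
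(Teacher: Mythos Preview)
Your proof is correct and follows essentially the same route as the paper: reduce to nonzero bases, take \( S \) to be the set of primes dividing \( a_1\cdots a_k \), apply Theorem~\ref{mainth} to make each \( \{a_i^n\} \) converge to \( 0 \) in \( \mathbb{Z}\{s_n\} \), and then pass to the integer linear combination by continuity of the group operations. You are somewhat more explicit than the paper in handling the degenerate case (all terms discarded) and in noting that the sum \( d_1a_1^n+\dots+d_ka_k^n \) need not itself be an \( S \)-integer, but the argument is the same.
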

\begin{proof}
	We may assume that all \( a_i \), \( i = 1, \dots, k \), are nonzero.
	Let \( S \) be the set of all prime divisors of \( a_1 \dots a_k \).
	Clearly \( a_i^n \), \( i = 1, \dots, k \), are \( S \)-integers for all~\( n \),
	so by the main theorem, there exists a Hausdorff group topology on \( \mathbb{Z} \)
	where each sequence \( \{ a_i^n \}_{n \in \mathbb{N}} \) converges to zero.
	Every integer linear combination of sequences converging to zero also
	converges to zero, hence we are done.
\end{proof}

Notice that the requirement \( a_i \neq \pm 1 \) is essential, since, for instance, the sequence \( \{ 1^n \}_{n \in \mathbb{N}} \)
cannot converge to \( 0 \) in a Hausdorff group topology.

The proof of the main result makes use of the so-called \textit{unit equations},
which are essentially Diophantine equations. We mention that previously
unit equations were applied to T-sequences by M.~Higasikawa in~\cite{masasi},
where it was shown that certain properties of topological groups are not preserved
under taking products.

\section{Proof of the main result}

We start with several definitions.
Let \( S \) be a finite set of primes. A rational number is called an \textit{\( S \)-unit}
if it belongs to the multiplicative group generated by \( S \cup \{ -1 \} \). The set
of \( S \)-units is denoted by \( U_S \). Notice that \( S \)-integers are \( S \)-units.

Let \( c_i \), \( i = 1, \dots, k \), and \( M \) be nonzero integers.
We say that a solution \( (x_1, \dots, x_k) \) to the equation
\begin{equation}\label{seq}
	c_1x_1 + \dots + c_kx_k = M
\end{equation}
is \textit{non-degenerate} if \( \sum_{i \in I} c_ix_i \neq 0 \)
for all nonempty subsets \( I \) of \( \{ 1, \dots, k \} \).

\begin{proposition}\textup{\cite[Corollary~6.1.2]{sunits}}\label{finsols}
	Let \( S \) be a finite set of primes.
	Equation~\textup{(\ref{seq})} has finitely many non-degenerate solutions in \( S \)-units \( x_1, \dots, x_k \in U_S \).
\end{proposition}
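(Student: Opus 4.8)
The plan is to reduce the statement to the classical finiteness theorem for multivariable $S$-unit equations: if $\Gamma$ is a finitely generated subgroup of $\mathbb{Q}^*$ and $n \geq 1$, then $y_1 + \dots + y_n = 1$ has only finitely many solutions $(y_1, \dots, y_n) \in \Gamma^n$ in which no proper nonempty subsum $\sum_{i \in I} y_i$ vanishes. This is the result (due to Evertse and, independently, van der Poorten--Schlickewei) that \cite[Corollary~6.1.2]{sunits} packages, and once it is granted, only a routine normalization is needed.

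First I would enlarge \( S \) to a finite set \( S' \) of primes containing \( S \) together with every prime dividing \( c_1 \cdots c_k M \). Then each ratio \( c_i / M \) is an \( S' \)-unit, and since \( U_S \subseteq U_{S'} \), every solution \( (x_1, \dots, x_k) \) of~(\ref{seq}) in \( S \)-units gives \( S' \)-units \( y_i := (c_i/M)\, x_i \). Dividing~(\ref{seq}) by \( M \) turns it into \( y_1 + \dots + y_k = 1 \). The group \( U_{S'} \) is a finitely generated subgroup of \( \mathbb{Q}^* \) — elementarily, it is generated by \( -1 \) and the finitely many primes in \( S' \) — so the classical theorem is applicable provided the subsum condition matches up.

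Next I would check exactly that. For a nonempty \( I \subseteq \{1, \dots, k\} \) we have \( \sum_{i \in I} y_i = M^{-1} \sum_{i \in I} c_i x_i \), so \( \sum_{i \in I} y_i = 0 \) if and only if \( \sum_{i \in I} c_i x_i = 0 \). Hence \( (x_1, \dots, x_k) \) is non-degenerate precisely when \( (y_1, \dots, y_k) \) has no vanishing subsum; since the full sum equals \( 1 \neq 0 \), this is the same as saying no proper nonempty subsum vanishes, which is exactly the hypothesis of the classical theorem. Therefore there are only finitely many tuples \( (y_1, \dots, y_k) \), and because \( c_i, M \neq 0 \), the map \( (y_1, \dots, y_k) \mapsto (x_1, \dots, x_k) \) with \( x_i = (M/c_i)\, y_i \) recovers each original solution. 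Thus~(\ref{seq}) has only finitely many non-degenerate solutions in \( U_S \).

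The genuine difficulty is entirely inside the black box being invoked: the finiteness theorem for the multivariable unit equation is a deep result, established through Schmidt's subspace theorem (and, in the two-variable case, already through Baker's theory of linear forms in logarithms), and it is ineffective in general for \( k \geq 3 \). Everything else is bookkeeping; the one point worth stating with care is that passing from \( S \) to the larger set \( S' \) only enlarges the solution set, so finiteness over \( U_{S'} \) immediately implies finiteness over \( U_S \).
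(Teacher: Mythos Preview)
The paper does not prove this proposition at all: it is quoted verbatim as \cite[Corollary~6.1.2]{sunits} and used as a black box. Your proposal is a correct and standard derivation of the stated form from the normalized multivariable unit equation \(y_1+\dots+y_k=1\) over a finitely generated subgroup of \(\mathbb{Q}^*\); the enlargement \(S\to S'\), the substitution \(y_i=(c_i/M)x_i\), and the check that the non-degeneracy conditions correspond are all in order, and the inverse map \(x_i=(M/c_i)y_i\) is well defined because \(c_i,M\neq 0\).

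So there is no divergence in approach to speak of: the paper simply imports the result, while you spell out the one-line normalization that any reader would perform to match the cited corollary to equation~(\ref{seq}). As you yourself note, the substantive content---finiteness of non-degenerate solutions, ultimately resting on the Subspace Theorem---remains an external input in both cases.
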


It is easy to see that there can be infinitely many degenerate solutions to such an equation.
For example, \( x_1 + x_2 + x_3 = 1 \) has infinitely many solutions of the form \( (x, -x, 1) \).
Nevertheless, a certain finiteness condition holds for arbitrary solutions of~(\ref{seq}).

\begin{lemma}\label{inhom}
	Let \( S \) be a finite set of primes.
	Given equation~\textup{(\ref{seq})}, there exist finite sets \( V_i \), \( i = 1, \dots, k \),
	such that for any solution \( (x_1, \dots, x_k) \in U_S^k \) of the equation,
	there exists an index \( i \in \{ 1, \dots, k \} \) with \( x_i \in V_i \).
\end{lemma}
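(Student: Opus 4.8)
The idea is to reduce the inhomogeneous equation to the homogeneous-type situation covered by Proposition \ref{finsols}, by induction on the number of variables $k$. Given a solution $(x_1,\dots,x_k) \in U_S^k$ of $c_1x_1 + \dots + c_kx_k = M$, consider all nonempty subsets $I \subseteq \{1,\dots,k\}$ on which the partial sum $\sum_{i\in I} c_ix_i$ vanishes. If no such subset exists, the solution is non-degenerate, and by Proposition \ref{finsols} it lies in a fixed finite set of tuples; in particular each $x_i$ lies in a fixed finite set, which we throw into $V_i$. So the work is entirely with degenerate solutions.

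For a degenerate solution, pick a minimal nonempty $I$ with $\sum_{i\in I} c_ix_i = 0$. By minimality, this vanishing partial sum has no proper nonempty sub-sum that vanishes, so $(x_i)_{i\in I}$ is a non-degenerate solution of the homogeneous equation $\sum_{i\in I} c_iy_i = 0$. Here I would invoke the standard $S$-unit theorem for the homogeneous equation (which follows from Proposition \ref{finsols} applied after normalizing: divide by one of the terms, say $c_{i_0}x_{i_0}$, to get an equation $\sum c_i'y_i' = 1$ with one fewer variable whose non-degenerate solutions are finite) — the conclusion being that the ratios $x_i/x_j$ for $i,j \in I$ take only finitely many values. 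Fix one such ratio tuple; then all $x_i$, $i \in I$, are determined up to a common $S$-unit scalar $\lambda$, i.e. $x_i = \lambda u_i$ for fixed $u_i \in U_S$ and varying $\lambda \in U_S$. Substituting back, the original equation becomes $\lambda\big(\sum_{i\in I} c_i u_i\big) + \sum_{j\notin I} c_j x_j = M$; but $\sum_{i\in I} c_i u_i = 0$, so the $\lambda$ disappears entirely and we are left with $\sum_{j\notin I} c_j x_j = M$, an equation of the same shape in the strictly smaller variable set $\{1,\dots,k\}\setminus I$. By the induction hypothesis there are finite sets $V_j'$, $j \notin I$, such that some $x_j \in V_j'$. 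Adjoining these to the $V_j$ handles this case.

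Finally I would assemble the finite collection: there are only finitely many subsets $I$ to consider, finitely many ratio-tuples for each, and for each the induction supplies finitely many exceptional values on the complement; taking unions over all these choices, together with the finite set coming from the non-degenerate case, produces the desired $V_1,\dots,V_k$. The base case $k=1$ is immediate, since $c_1x_1 = M$ has at most one solution. The main obstacle — and the place to be careful — is the bookkeeping in the induction: one must ensure that the $\lambda$ genuinely cancels (this is exactly why we chose $I$ to be a vanishing sub-sum) so that the residual equation truly has fewer variables and the same inhomogeneous form, and one must confirm that the homogeneous $S$-unit finiteness is legitimately available from the stated Proposition \ref{finsols} after the normalization step. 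No single step is deep beyond Proposition \ref{finsols}; the art is in organizing the case split cleanly.
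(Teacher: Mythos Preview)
Your argument is correct, but it takes a longer road than necessary, and the paper's proof is substantially shorter. The paper does not induct: it takes a \emph{maximal} subset \(J\) with \(\sum_{j\in J} c_j x_j = 0\) and observes that on the complement \(I=\{1,\dots,k\}\setminus J\) the tuple \((x_i)_{i\in I}\) is automatically a \emph{non-degenerate} solution of \(\sum_{i\in I} c_i x_i = M\), so Proposition~\ref{finsols} applies directly to the inhomogeneous equation on \(I\). The sets \(V_i\) are then just the unions, over all \(I\ni i\), of the \(i\)-th coordinates of non-degenerate solutions of \(\sum_{j\in I} c_j x_j = M\). No induction, no homogeneous equation, no ratio argument.

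Your approach instead picks a \emph{minimal} vanishing subset \(I\), invokes the homogeneous \(S\)-unit theorem there, and inducts on the complement. This works, but notice that the entire homogeneous/ratio step is superfluous even within your own scheme: as soon as \(\sum_{i\in I} c_i x_i = 0\), subtracting from the original equation gives \(\sum_{j\notin I} c_j x_j = M\) immediately, regardless of what the ratios on \(I\) look like, and you can induct straight away. So the normalization of Proposition~\ref{finsols} to the homogeneous case, and the parametrization \(x_i=\lambda u_i\), buy you nothing. What the paper's maximal-subset trick buys is that the induction itself disappears: maximality of \(J\) forces non-degeneracy on the complement in one stroke.
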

\begin{proof}
	For a nonempty \( I \subseteq \{ 1, \dots, k \} \) and \( i \in I \), define
	\[ V(I, i) = \{ x_i \mid \{ x_j \}_{j \in I} \subseteq U_S
	   \text{ is a non-degenerate solution to } \sum_{j \in I} c_j x_j = M \}. \]
	By Proposition~\ref{finsols}, there are finitely many non-degenerate solutions in \( S \)-units
	to the equation \( \sum_{j \in I} c_j x_j = M \), so the sets \( V(I, i) \) are finite for all
	\( I \subseteq \{ 1, \dots, k \} \) and \( i \in I \).

	Now define \( V_i = \bigcup_{i \in I \subseteq \{ 1, \dots, n \}} V(I, i) \). Obviously all
	sets \( V_i \) are finite. Suppose that \( (x_1, \dots, x_k) \in U_S^k \) is a solution
	to~(\ref{seq}). Let \( J \) be the largest subset of \( \{ 1, \dots, n \} \)
	such that \( \sum_{j \in J} c_j x_j = 0 \). Clearly \( J \) is a proper subset (since \( M \neq 0 \)),
	so \( I = \{ 1, \dots, n \} \setminus J \) is nonempty. By maximality of \( J \), it follows that \( \{ x_j \}_{j \in I} \)
	is a non-degenerate solution to the equation \( \sum_{j \in I} c_j x_j = M \). Choose any \( i \in I \).
	Then \( x_i \in V(I, i) \), and so \( x_i \in V_i \) as required.
\end{proof}

There is a connection between T-sequences and integer equations.
\begin{lemma}\label{newcrit}
	A sequence of integers \( \{ s_n \}_{n \in \mathbb{N}} \) is a T-sequence if and only if
	for all integers \( c_1, \dots, c_k \) and all nonzero integers \( M \) there exists
	a number \( m \) such that
	\[ c_1 s_{m_1} + \dots + c_k s_{m_k} \neq M \]
	for all \( m_1, \dots, m_k \) with \( m < m_1 < \dots < m_k \).
\end{lemma}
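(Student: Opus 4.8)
The statement is an equivalence; I would prove the two implications separately, the ``only if'' being a short unwinding of the axioms of a topological group and the ``if'' carrying the real content. \emph{Necessity.} Suppose $\mathbb{Z}$ carries a Hausdorff group topology in which $\{s_n\}$ converges to $0$, and fix integers $c_1,\dots,c_k$ and a nonzero integer $M$. As points are closed, pick a neighbourhood $U$ of $0$ with $M\notin U$; by continuity of addition and inversion pick a symmetric neighbourhood $V$ of $0$ whose $(|c_1|+\dots+|c_k|)$-fold sumset $V+\dots+V$ lies in $U$; finally pick $m$ with $s_n\in V$ for all $n>m$. Then for every $m<m_1<\dots<m_k$ the element $c_1s_{m_1}+\dots+c_ks_{m_k}$ is a sum of $|c_1|+\dots+|c_k|$ of the elements $\pm s_{m_i}\in V$, so it lies in $U$ and hence is not $M$; the case $c_1=\dots=c_k=0$ is trivial since $M\neq 0$.

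\emph{Sufficiency: reductions and setup.} Assume the displayed condition. Taking $k=1$, $c_1=1$ shows that no nonzero integer is attained infinitely often by $\{s_n\}$; deleting the terms equal to $0$ affects neither the condition nor whether the sequence is a T-sequence (zero terms are irrelevant to convergence to $0$, and any pertinent combination simply discards them), so I may assume $s_n\neq 0$ for all $n$, whence $|s_n|\to\infty$. The plan is then to place on $\mathbb{Z}$ the finest group topology in which $s_n\to 0$ --- which exists, being the join of all group topologies with this property, and is the topology of $\mathbb{Z}\{s_n\}$ from \cite{prot} --- and to show it is Hausdorff, which is exactly the assertion that $\{s_n\}$ is a T-sequence (so, with \cite{prot}'s description of this topology in hand, one could alternatively obtain the ``if'' direction by quoting its characterisation of Hausdorffness). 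That topology has a base of neighbourhoods of $0$ formed by sets $U_\varphi$, indexed by rapidly growing functions $\varphi$, each consisting of integer combinations $c_1s_{n_1}+\dots+c_ps_{n_p}$ with $n_1<\dots<n_p$ in which $\varphi$ both caps the $|c_i|$ and forces the $n_i$ to be widely separated; that $\{U_\varphi\}$ is a group-topology base is the standard interleaving-and-halving verification, and $s_n\in U_\varphi$ once $n$ is large, so $s_n\to 0$.

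\emph{Sufficiency: Hausdorffness.} It remains to check $\bigcap_\varphi U_\varphi=\{0\}$. Given $M\neq 0$, I would build $\varphi$ with $M\notin U_\varphi$ by recursion on the growth of $\varphi$: having pinned down $\varphi$ below some level, the combinations that could still represent $M$ within $U_\varphi$ use only a finite list of coefficient vectors on the indices already exposed, and the hypothesis --- which for each fixed vector bounds how far out a solution $c_1s_{m_1}+\dots+c_ks_{m_k}=M$ can lie --- dictates how fast $\varphi$ must grow at the next level to exclude all of them. The reduction $|s_n|\to\infty$ is what keeps that list finite at each stage, since otherwise a would-be representation of $M$ could hide by spreading itself over very many small-coefficient terms; one would also note, as part of this, that a minimal-length representation of $M$ by late terms is non-degenerate in the sense of \textup{(\ref{seq})} (a vanishing proper subsum could be deleted, shortening it). Carrying the recursion through produces $\varphi$ with $M\notin U_\varphi$, so the topology is Hausdorff and $\{s_n\}$ is a T-sequence.

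\emph{The main obstacle.} The delicate ingredient is the infrastructure for $\mathbb{Z}\{s_n\}$, not the final recursion: since $\mathbb{Z}$ is torsion-free, a neighbourhood of $0$ must limit the growth of coefficients, yet the naive cap ``$|c_i|\le 1$'' cannot survive halving without collapsing the topology to the discrete one, so the cap on coefficients has to be interlocked with the control on the index-gaps inside the single parameter $\varphi$, and the interleaving argument must respect both at once. Getting this coupling right --- and, dually, verifying that membership of $M$ in every $U_\varphi$ genuinely forces solutions $c_1s_{m_1}+\dots+c_ks_{m_k}=M$ of bounded length and bounded coefficients at arbitrarily high indices, so that the finitely-many-thresholds argument applies --- is the technical core, and is precisely what the study of $\mathbb{Z}\{s_n\}$ in \cite{prot} provides.
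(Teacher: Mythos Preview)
The paper's own proof is a bare citation of \cite[Theorem~2]{prot}, so your proposal is not so much a different approach as an expanded account of what lies behind that citation: your necessity argument is correct and self-contained, and your sufficiency sketch correctly identifies the construction of the finest group topology $\mathbb{Z}\{s_n\}$, the basic neighbourhoods indexed by growth functions, and the recursive verification of Hausdorffness, while (like the paper) ultimately deferring the technical verifications to \cite{prot}. In short, your outline is sound and agrees with the paper's route; the only caveat is that what you present is a sketch rather than a complete proof, which is exactly the status the paper itself gives this lemma.
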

\begin{proof}
	It follows from \cite[Theorem~2]{prot}. See also \S~3 in \cite{prot}.
\end{proof}

The following lemma shows that a growing sequence of \( S \)-integers is a T-sequence.

\begin{lemma}\label{mainlem}
	Let \( S \) be a finite set of primes and let \( \{ s_n \}_{n \in \mathbb{N}} \)
	be a sequence of \( S \)-integers. If \( \lim_{n \to \infty} |s_n| = \infty \),
	then \( \{ s_n \}_{n \in \mathbb{N}} \) is a T-sequence.
\end{lemma}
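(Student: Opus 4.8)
The plan is to verify the equational criterion of Lemma~\ref{newcrit} directly, arguing by contradiction and feeding a hypothetical counterexample into Lemma~\ref{inhom}. Suppose \( \{ s_n \}_{n \in \mathbb{N}} \) is not a T-sequence. Then by Lemma~\ref{newcrit} there are integers \( c_1, \dots, c_k \) and a nonzero integer \( M \) such that for every \( m \) one can find indices \( m < m_1 < \dots < m_k \) with \( c_1 s_{m_1} + \dots + c_k s_{m_k} = M \).

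First I would dispose of the zero coefficients. Put \( J = \{ i : c_i \neq 0 \} \). If \( J = \emptyset \) then the left-hand side is always \( 0 \neq M \), a contradiction, so \( J \) is nonempty and for every \( m \) there are indices \( m < m_1 < \dots < m_k \) with \( \sum_{j \in J} c_j s_{m_j} = M \). Now apply Lemma~\ref{inhom} to the equation \( \sum_{j \in J} c_j x_j = M \), whose coefficients are all nonzero as required: this produces finite sets \( V_j \), \( j \in J \), such that every solution in \( S \)-units has some coordinate lying in the corresponding \( V_j \). Set \( B = \max_{j \in J} \max \{ |v| : v \in V_j \} \); if every \( V_j \) is empty the equation has no \( S \)-unit solution at all, contradicting the previous sentence, so \( B \) is a well-defined nonnegative integer.

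The conclusion then follows from the growth hypothesis. Since \( |s_n| \to \infty \), choose \( m \) so large that \( |s_n| > B \) for all \( n > m \); in particular such \( s_n \) are nonzero \( S \)-integers, hence \( S \)-units. Pick indices \( m < m_1 < \dots < m_k \) with \( \sum_{j \in J} c_j s_{m_j} = M \). Then \( (s_{m_j})_{j \in J} \) is a solution of \( \sum_{j \in J} c_j x_j = M \) in \( S \)-units, so by Lemma~\ref{inhom} some \( s_{m_j} \) lies in \( V_j \), forcing \( |s_{m_j}| \le B \); but \( m_j > m \) gives \( |s_{m_j}| > B \), a contradiction. Hence \( \{ s_n \}_{n \in \mathbb{N}} \) satisfies the criterion of Lemma~\ref{newcrit}.

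I expect the only genuine content to be the reduction to Lemma~\ref{inhom}, with everything else being bookkeeping. The one point requiring care is that Lemma~\ref{newcrit} allows the \( c_i \) to vanish (and to coincide), whereas Lemma~\ref{inhom}, via Proposition~\ref{finsols}, is stated for equations with nonzero coefficients — this is exactly why the passage to the index set \( J \) is needed. No property of the \( s_n \) beyond \( |s_n| \to \infty \) is used: their signs and possible repetitions are irrelevant.
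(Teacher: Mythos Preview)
Your argument is correct and follows essentially the same route as the paper: assume the criterion of Lemma~\ref{newcrit} fails, reduce to nonzero coefficients, apply Lemma~\ref{inhom} to the resulting \( S \)-unit equation, and use \( |s_n| \to \infty \) to escape the finite sets \( V_i \). The only difference is cosmetic---you spell out the passage to the index set \( J \) and the uniform bound \( B \), whereas the paper simply says ``which we may assume to be nonzero'' and ``pick \( m \) large enough''.
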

\begin{proof}
	Suppose that \( \{ s_n \}_{n \in \mathbb{N}} \)	is not a T-sequence.
	By Lemma~\ref{newcrit}, there exist integers \( c_1, \dots, c_k \) (which we may assume to be nonzero)
	and a nonzero integer \( M \) such that for all \( m \) we can ensure that the equality
	\[ c_1 s_{m_1} + \dots + c_k s_{m_k} = M \]
	holds for some \( m_1, \dots, m_k \) with \( m < m_1 < \dots < m_k \).
	From now on we assume that given a number \( m \), respective numbers \( m_1, \dots, m_k \) are
	also given. The choice of \( m_i \) depends on \( m \), but we drop that indication.

	Consider the respective \( S \)-unit equation:
	\[ \sum_{i = 1}^k c_i x_i = M, \]
	and recall that by Lemma~\ref{inhom}, there exist finite sets \( V_i \), \( i = 1, \dots, k \),
	such that for any solution \( (x_1, \dots, x_k) \in U_S^k \) of that equation there exists some \( i \) with
	\( x_i \in V_i \). Now, pick \( m \) large enough to make \( s_{m_i} \) greater by absolute value than any
	element from \( V_i \) for all \( i = 1, \dots, k \) (this is possible since \( \lim_{n \to \infty} |s_n| = \infty \)).
	But \( (s_{m_1}, \dots, s_{m_k}) \) is an \( S \)-unit solution to our equation, so we have \( s_{m_i} \in V_i \)
	for some \( i \). This is a contradiction.
\end{proof}

We proceed with the proof of the main result.

Let \( a_n \) be the \( n \)th positive \( S \)-integer (in the increasing order).
Define \( s_n = (-1)^n a_{\lfloor \frac{n+1}{2} \rfloor} \). Notice that \( \{ s_n \}_{n \in \mathbb{N}} \)
contains all nonzero \( S \)-integers and \( |s_n| \leq |s_{n+1}| \) for all \( n \).
Clearly \( \lim_{n \to \infty} |s_n| = \infty \), so \( \{ s_n \}_{n \in \mathbb{N}} \) is a T-sequence by Lemma~\ref{mainlem}.

Let \( \{ d_n \}_{n \in \mathbb{N}} \) be a sequence of \( S \)-integers with \( \lim_{n \to \infty} |d_n| = \infty \).
Let \( W \) be a neighborhood of \( 0 \) in \( \mathbb{Z}\{ s_n \} \). Since \( \{ s_n \}_{n \in \mathbb{N}} \)
converges to \( 0 \) in that topology, for some index \( m \) all numbers \( s_n \), \( n \geq m \), lie in~\( W \).
As \( \lim_{n \to \infty} |d_n| = \infty \), for some index \( k \) all numbers \( d_n \), \( n \geq k \),
lie in the set \( \{ s_n \mid n \geq m \} \) and, consequentially, lie in \( W \). The choice of \( W \) was arbitrary,
therefore the sequence \( \{ d_n \}_{n \in \mathbb{N}} \) converges to \( 0 \) in \( \mathbb{Z}\{ s_n \} \) as claimed.

\bigskip

\noindent
{\sl Saveliy V. Skresanov\\
Sobolev Institute of Mathematics, 4 Acad. Koptyug avenue\\
and\\
Novosibirsk State University, 1 Pirogova street,\\
630090 Novosibirsk, Russia\\
e-mail: skresan@math.nsc.ru}

\end{document}